\theoremstyle{plain}
\newtheorem{corollary}{Corollary}%[section]
\newtheorem{theorem}[corollary]{Theorem}
\newtheorem{lemma}[corollary]{Lemma}
\newtheorem{proposition}[corollary]{Proposition}
\newtheorem*{theorem*}{Theorem}
\newtheorem*{lemma*}{Lemma}
\newtheorem*{definition*}{Definition}
\newtheorem*{corollary*}{Corollary}
\theoremstyle{definition}
\theoremstyle{remark}
\newcommand{\R}{\mathbb{R}}
\newcommand{\Z}{\mathbb{Z}}
\newcommand{\proba}{\mathbb{P}}
\newcommand{\N}{\mathbb{N}}
\newcommand{\E}{\mathbb{E}}
\renewcommand{\L}{\Lambda}
\newcommand{\Uf}{\mathfrak{U}}
\newcommand{\Xf}{\mathfrak{X}}
\newcommand{\Xc}{\mathcal{X}}
\newcommand{\Uc}{\mathcal{U}}
\newcommand{\Xb}{\mathbf{X}}
\newcommand{\1}{\mathbf{1}}
\DeclareMathOperator*{\limit}{\longrightarrow}
\DeclareMathOperator*{\asym}{\sim}
\DeclareMathOperator*{\approxi}{\approx}
\newcommand*{\TV}{\text{TV}}
\begin{document}
\title{A.S. convergence for infinite colour P\'olya urns associated with stable random walks}
\author{Arthur Blanc-Renaudie\thanks{Universit\'e Paris Saclay, Orsay, ablancrenaudiepro@gmail.com}}
\date{\today}
\maketitle
\begin{abstract} 
We answer Problem 11.1 of Janson \cite{JansonWalkUrn} on P\'olya urns associated with stable random walk. Our proof use neither martingales nor trees, but an approximation with a differential equation.
\end{abstract}
\section{Introduction}
\subsection{Model and motivations}
We consider \emph{single ball addition random walk (SBARW) P\'olya urns} defined as follows. Let $d\in \N$. Let $\Delta$ be a Borel random variable on $\R^d$. Let $(\Delta_i)_{i\in \N}$ be independent copies of $\Delta$. Let $X_1=\Delta_1$. Then, independently for every $n\geq 2$, let $U_n$ be uniform in $\{1,\dots, n-1\}$ and let $X_n=X_{U_{n}}+\Delta_n$. We are interested in the random measure as $n\to \infty$
\[ \mu_{n} := \sum_{i=1}^n \delta_{X_i}. \]

Morally, we give colors $(X_i)_{i\in \N}$ to the balls of a classical P\'olya Urn, which corresponds to positions of some random walks with displacement $\Delta$. Since for $n\in \N$, $X_n$ corresponds to the position of such a random walk after approximatively $\log n$ steps, it is not hard to show, under natural assumptions for $\Delta$, that $X_n$ after proper renormalisation converge in law. Our problem is: Can we say the same thing for $\mu_n$? We show an almost sure (a.s.) convergence when $\Delta$ is in the stable regime, which was conjectured by Janson in  \cite{JansonWalkUrn} Problem 11.1.

This model of Urn was first studied by Bandyopadhyay and Thacker \cite{Bandyb,Bandyc,Bandya} who first studied precisely the law of $(X_n)$ and then showed the convergence of $\mu_n$ in distribution. At the same period, Mailler and Marckert \cite{UrnMarckertMailler}, also proved this convergence for $\mu_n$, and proved an almost sure convergence when $\Delta$ have finite exponential moment. Later, Janson \cite{JansonWalkUrn} extended this almost sure convergence assuming only a second moment. 

All those authors also studied another model of urn: the \emph{deterministic addition random walk (DARW) P\'olya urns} which is similar to the SBARW model. Although our method can be used without much change to study both models, we will focus for simplicity on the SBARW model.
\paragraph{Acknowledgment}  I was supported by the ERC consolidator grant 101001124 (UniversalMap), as well as ISF grants 1294/19 and 898/23. Also, I am thankful to two anonymous referee for their many detailed remarks on an earlier version of this paper.

%\begin{remark}
%I also believe the method of this paper work for several generalization of SBARW urns: non integer number of balls, adding several balls at once, non uniformly distributed displacement $(\Delta_i)_{i\in \N}$\dots  
%However, I am not fully aware of the various urn motivations. So I decided instead to keep the setting as elementary as possible, to better explain the method. 
%\end{remark}
\subsection{Main result and differential equation}
In this paper we assume that $\Delta$ is in the stable regime. That is we assume that there exists $\alpha>0$, and a random variable $\L$ on $\R^d$ of law $\mathcal L(\Lambda)$ such that as $n\to \infty$,
\begin{equation} \frac{1}{n^\alpha} \sum_{i=1}^n  \Delta_i \limit^{(d)} \L . \label{eq:CVloi} \end{equation}
%\begin{remark}
%In \eqref{eq:CVloi} we also assume that $\Delta$ is centered to avoid some difficulties that have already been studied in the Gaussian case in \cite{UrnMarckertMailler,JansonWalkUrn}.
%\end{remark}

Those random variables are well-known (see e.g. Kolmogorov and Gnedenko \cite{KolmogorovGnedenko} Chapter 7) and notably there exists $\beta>0$ such that as $n\to \infty$, 
\begin{equation} \proba(\|\Delta\|>n^{\beta})=o(1/n). \label{eq:tail} \end{equation}
%This will play a crucial role in our proof. 

For every $a>0$, and finite Borel measure $\mu$ on $\R^d$ we define its renormalization $\Theta_a(\mu)$ as the unique Borel measure such that for every Borel set $B\subset \R^d$ we have $\Theta_a(\mu)(B/a)=\mu(B)$. %For $n\in \N$, we define $\tilde \mu_n := \Theta_{n,0}(\mu)=\frac{1}{n} \sum_{i=1}^n \delta_{X_i}$. Note that 
%$\Theta_a(\tilde \mu_n)=\frac{1}{n} \sum_{i=1}^n \delta_{X_i/a}.$

The main goal of this paper is to show the following theorem.
\begin{theorem}\label{Thm} Under \eqref{eq:CVloi}, almost surely,  we have the following weak convergence as $n\to \infty$,
\[ \Theta_{\log(n)^{\alpha}}(\mu_n/n)\limit \mathcal L(\Lambda) .\]
\end{theorem}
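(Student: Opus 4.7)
The plan is to work with the empirical characteristic function
\[
F_n(t) \;:=\; \int_{\R^d} e^{\mathrm{i}\langle t,x\rangle}\, d(\mu_n/n)(x) \;=\; \frac{1}{n}\sum_{i=1}^n e^{\mathrm{i}\langle t, X_i\rangle},
\]
and to show that, almost surely, $F_n(t/\log(n)^\alpha) \to \widehat{\Lambda}(t)$ for every $t$ in a countable dense subset of $\R^d$, where $\widehat{\Lambda}(t) := \E[e^{\mathrm{i}\langle t,\Lambda\rangle}]$. Combined with tightness of the rescaled measures $\Theta_{\log(n)^\alpha}(\mu_n/n)$---itself a routine consequence of~\eqref{eq:tail}---this will yield the desired weak convergence via L\'evy's continuity theorem.

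The first step is to identify the deterministic ODE that $F_n$ should track. Conditioning on $\mathcal{F}_n := \sigma(X_1,\dots,X_n)$ and using $X_{n+1}=X_{U_{n+1}}+\Delta_{n+1}$, the independence of $\Delta_{n+1}$ gives $\E[e^{\mathrm{i}\langle t,X_{n+1}\rangle}\mid\mathcal{F}_n]=\widehat{\Delta}(t)\,F_n(t)$ with $\widehat{\Delta}(t):=\E[e^{\mathrm{i}\langle t,\Delta\rangle}]$, whence
\[
\E\bigl[F_{n+1}(t)\mid \mathcal{F}_n\bigr] \;=\; F_n(t)\,\Bigl(1+\tfrac{\widehat{\Delta}(t)-1}{n+1}\Bigr).
\]
Under the time change $s=\log n$ this is exactly an Euler discretisation of the linear ODE $\partial_s\psi_s(t)=(\widehat{\Delta}(t)-1)\psi_s(t)$, whose explicit solution is $\psi_s(t)=\exp(s(\widehat{\Delta}(t)-1))$. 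A direct Fourier reading of~\eqref{eq:CVloi} gives $m(\widehat{\Delta}(t/m^\alpha)-1)\to\log\widehat{\Lambda}(t)$ as $m\to\infty$, so setting $s=\log n$ and rescaling $t\mapsto t/\log(n)^\alpha$ yields $\psi_{\log n}(t/\log(n)^\alpha)\to\widehat{\Lambda}(t)$: the deterministic ODE already produces the correct limit.

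It remains to show that the random trajectory $F_n$ stays close to $\psi_{\log n}$ in an almost sure sense. Setting
\[
D_{n+1}(t):=F_{n+1}(t)-\E[F_{n+1}(t)\mid\mathcal{F}_n]=\tfrac{1}{n+1}\bigl(e^{\mathrm{i}\langle t,X_{n+1}\rangle}-\widehat{\Delta}(t)F_n(t)\bigr),
\]
one has the trivial bound $|D_{n+1}(t)|\le 2/(n+1)$ and the conditional-variance estimate $\mathrm{Var}(D_{n+1}(t)\mid\mathcal{F}_n)=O(1/n^2)$. The strategy is to iterate the recursion, treat the sequence $(D_n)$ as a forcing term in a deterministic Gronwall-type comparison between $F_n$ and $\psi_{\log n}$, and use the tail control~\eqref{eq:tail}---which via Borel--Cantelli limits the size of the $\|\Delta_n\|$ and hence of the excursions of the $X_i$'s---to rule out pathological trajectories. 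The hoped-for output is almost sure pointwise convergence of $F_n(t/\log(n)^\alpha)$ at every fixed $t$, with sufficient uniformity in $t$ on compacta to conclude by L\'evy.

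The main obstacle is precisely this last step: producing an almost sure (not merely in probability) bound on the cumulative fluctuation, without invoking the martingale inequalities used in \cite{JansonWalkUrn,UrnMarckertMailler} or the recursive-tree decomposition of previous works. Since~\eqref{eq:tail} is strictly weaker than a moment assumption on $\Delta$, standard concentration is unavailable, and the a.s.\ control must be extracted pathwise from the ODE comparison itself; quantifying this pathwise control uniformly in $t$ on compact sets, so that no polynomial-in-$n$ loss destroys the signal $\psi_{\log n}(t/\log(n)^\alpha)$, is the technical heart of the proof.
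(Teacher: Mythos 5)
Your Fourier set-up is sound as far as it goes: the recursion $\E[F_{n+1}(t)\mid\mathcal F_n]=F_n(t)\bigl(1+\tfrac{\widehat{\Delta}(t)-1}{n+1}\bigr)$ is correct, the identification of the deterministic limit $\exp\bigl(\log n\,(\widehat{\Delta}(t/\log(n)^\alpha)-1)\bigr)\to\widehat{\Lambda}(t)$ is the right Fourier reading of \eqref{eq:CVloi}, and a.s.\ pointwise convergence on a countable dense set plus a.s.\ tightness would indeed give the theorem. But the argument stops exactly where the theorem lives: the almost-sure control of the accumulated fluctuation is not proved, and the tools you name cannot deliver it as stated. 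Iterating the recursion gives $F_n(u)=F_m(u)\prod_{k=m+1}^n\bigl(1+\tfrac{\widehat{\Delta}(u)-1}{k}\bigr)+\sum_{k=m+1}^n D_k(u)\prod_{j=k+1}^n\bigl(1+\tfrac{\widehat{\Delta}(u)-1}{j}\bigr)$; with only $|D_k|\le 2/k$, a deterministic Gronwall comparison bounds the forcing by $\sum_k 2/k\asymp\log n$, which swamps the $O(1)$ signal. Any useful bound must exploit cancellations among the $D_k$, i.e.\ precisely the martingale concentration (Doob/Azuma, conditional variances) you announce you will avoid; and even granting it, the variance bound $O(1/k^2)$ is not enough: at a \emph{fixed} frequency $u\neq 0$ the standard deviation of $F_n(u)$ is of the same order as its mean (the multiplicative martingale $F_n(u)/\prod_k(1+\tfrac{\widehat{\Delta}(u)-1}{k})$ has a nondegenerate random limit), so concentration can only come from the fact that the frequency $t/\log(n)^\alpha$ tends to $0$, and one must quantify how $\E[|D_k(t/\log(n)^\alpha)|^2\mid\mathcal F_{k-1}]$ is much smaller than $1/k^2$ when the spatial spread of $X_k$ is small compared with $\log(n)^\alpha$. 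That estimate is the heart of the matter and is absent (you acknowledge as much in your last paragraph).

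Two further gaps. Because the evaluation point moves with $n$, the sequence $n\mapsto F_n(t/\log(n)^\alpha)$ is not a martingale, so even a correct variance estimate does not by itself give almost-sure (rather than in-probability) convergence; one needs a subsequence/chaining device plus Borel--Cantelli --- this is exactly the role played in the paper by the times $T_n=\lfloor e^{3n^{1/3}}\rfloor$ together with Lemma \ref{dTV} --- and none is specified. Similarly, a.s.\ tightness of $\Theta_{\log(n)^\alpha}(\mu_n/n)$ and a.s.\ negligibility of the early-time term $F_m(t/\log(n)^\alpha)$ are not routine consequences of \eqref{eq:tail}: that bound gives only $o(1/n)$, non-summable, probabilities, so an argument along a subsequence as in Lemma \ref{tail} is needed. (Also, the claim that ``standard concentration is unavailable'' is misplaced: the increments $D_k$ are bounded by $2/k$ whatever the tails of $\Delta$, since characteristic functions are bounded; the obstruction is the moving frequency and the almost-sure requirement, not heavy tails.) In spirit your plan is a Fourier-space analogue of the paper's strategy, where Section 2 (Bernstein's inequality on a grid of boxes, Borel--Cantelli along $T_n$, then Lipschitz test functions) supplies precisely the a.s.\ concentration you leave open; so what is missing is not a formality but the entire technical content of the result.
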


We use a completely different method from the ones used in \cite{Bandyb,Bandyc,Bandya, JansonWalkUrn, UrnMarckertMailler}, which are based on martingales, Yule trees and branching random walk. Our proof use none of those tools, and is instead based on an approximation with a differential equation for $(\mu_n)_{n\in \N}$.

We can think of our urn problem as the following informal approximative differential equation:
\begin{equation} \frac{\mathrm{d}}{\mathrm{d}t} \mu_t \approx \frac{1}{t} \mu_t\ast \Delta, \label{9/4/17ha}\end{equation}
where $\ast$ denotes the convolution between two measures. To make use of this differential equation, we consider some times $0=T_0<T_1<T_2<\dots <T_n<\dots$, and use for $n\in \N$,
\[ \mu_{T_n}=\sum_{i=1}^n (\mu_{T_i}-\mu_{T_{i-1}}).\]
This allows us to split the complex estimate of $(\mu_n)_{n\in \N}$ into many simpler ones. To this end, we follow \eqref{9/4/17ha}, and show that approximatively for $i\in \N$, %(think of $f(b)-f(a)\approx (b-a)f'$)
\begin{equation} \mu_{T_i}-\mu_{T_{i-1}} \approx (T_i-T_{i-1}) \frac{1}{T_{i-1}} \mu_{T_{i-1}}\ast \Delta . \label{9/4/17hb}\end{equation}
We will detail how we prove this approximation in the next section.

 For this approach to work, $(T_i)_{i\in \N}$ must be properly chosen. Indeed, the faster $(T_i)_{i\in \N}$ grows, the harder it is to study $\mu_n(T_i)-\mu_n(T_{i-1})$. On the other hand, the slower $(T_i)_{i\in \N}$ grows, the less precise we are in \eqref{9/4/17hb}, and by sum the less precise we are on $(\mu_n)_{n\in \N}$. 
 
 This approach is very similar to the chaining method developed by Talagrand \cite{Talagrand}, which, with proper divisions, found many applications  to prove nearly optimal bound of stochastic processes. Recently, the chaining method was also highly developed to study random geometry, and notably of trees constructed by stick-breaking (see e.g. the introduction of my thesis \cite{BRphd} for a detailed explanation on the method) or by aggregation (see e.g. S\'enizergues thesis \cite{SeniThesis}). Since P\'olya urns are closely related to recursive trees it is not surprising that the exact same methods can be used here.  
\subsection{Main ideas of the proof}
First, let us introduce some notations and random variables. To ease the writing, the random variables defined below, unless mentioned otherwise are defined independent and independent of the previously defined random variables.

For $n\in \N$ let $T_n:= \lfloor e^{3n^{1/3}}\rfloor $. Note that $(T_{n+1}-T_{n})/T_{n+1}\sim n^{-2/3}$.  Let $(R_n)_{n\in \N}$ be Bernoulli random variables with parameter $((T_{n+1}-T_n)/T_{n+1} )_{n\in \N}$. Let $(\tilde \Delta_i)_{i\in \N}$ be copies of $\Delta$.  For $n\in \N$, let $Y_n:=R_n\tilde \Delta_n$. For $n\in \N$, let $\Uc_n$ be uniform in $\{1,\dots,n\}$, and let $\Xc_n:=X_{\Uc_n}$.

The following result is our main technical idea to prove Theorem \ref{Thm}. Indeed, it will give us that a.s. given $(X_i)_{i\in \N}$,
\[ \Xc_{T_n}\approxi^{(d)}Y_1+Y_2+\dots+Y_n \]
 and Theorem \ref{Thm} will follow (see Section \ref{sec:Thm}). 
\begin{proposition} \label{Main1} Almost surely $\Xb:=((X_i)_{i\in \N},(U_i)_{i\in \N})$ satisfy the following property: \\
"For every $n\in \N$ large enough we can construct a coupling between $\Xc_{T_n}+Y_n$ and $\Xc_{T_{n+1}}$ such that those two variable are at distance for $\|\cdot \|_\infty$ at least $n^{-2}$ with probability at most $3n^{-4/3}.$"
\end{proposition}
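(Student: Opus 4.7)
My plan is to construct the coupling by decomposing both laws (conditional on $\Xb$) into an ``old ball'' part and a ``new ball'' part. Writing $M := T_{n+1} - T_n$, one has
\[ \mathcal L(\Xc_{T_n} + Y_n \mid \Xb) = \frac{1}{T_{n+1}} \sum_{j \leq T_n} \delta_{X_j} + \frac{M}{T_n T_{n+1}} \sum_{j \leq T_n} \delta_{X_j} \ast \mathcal L(\Delta), \]
\[ \mathcal L(\Xc_{T_{n+1}} \mid \Xb) = \frac{1}{T_{n+1}} \sum_{j \leq T_n} \delta_{X_j} + \frac{1}{T_{n+1}} \sum_{T_n < k \leq T_{n+1}} \delta_{X_k}. \]
The first sums coincide, so I couple them identically, e.g.\ by matching the event $\{R_n = 0\}$ with $\{\Uc_{T_{n+1}} \leq T_n\}$ via $\Uc_{T_n} = \Uc_{T_{n+1}}$; this accounts for mass $T_n/T_{n+1}$ with distance $0$. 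The task reduces to coupling the two remaining measures of mass $M/T_{n+1}$ with $\|\cdot\|_\infty$-failure at most $3n^{-4/3}$.

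For this I would partition $\{T_n+1,\dots,T_{n+1}\}$ into $\mathcal G := \{k : U_k \leq T_n\}$ (``good'') and $\mathcal B := \{k : U_k > T_n\}$ (``bad''). For $k \in \mathcal G$, the recursive relation $X_k = X_{U_k} + \Delta_k$ expresses $X_k$ as (an old ball) $+$ (a fresh $\Delta$), precisely the structure of a sample from the continuous measure $\frac{1}{T_n}\sum_{j \leq T_n} \delta_{X_j} \ast \mathcal L(\Delta)$ appearing in $\mathcal L(\Xc_{T_n} + Y_n)$. Since $|\mathcal G|$ is of order $M \sim T_{n+1} n^{-2/3}$---super-polynomial in $n$---and $\mathcal L(\Delta)$ is absolutely continuous, a standard empirical-measure concentration argument (or Strassen's characterization applied to $\mu(A) \leq \nu(A^{n^{-2}}) + 3n^{-4/3}$ for all Borel $A$) produces a coupling on $\mathcal G$ with $\|\cdot\|_\infty$-distance below $n^{-2}$ and failure probability much smaller than $n^{-4/3}$. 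For $k \in \mathcal B$, however, $X_{U_k}$ is not among the old balls, so $X_k$ cannot be matched to (old ball) $+$ (fresh $\Delta$); the mass $|\mathcal B|/T_{n+1}$ counts directly as failure.

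The quantitative heart of the argument is thus an almost-sure bound on $|\mathcal B|/T_{n+1}$. Since $|\mathcal B| = \sum_{k=T_n+1}^{T_{n+1}} \mathbf 1_{U_k > T_n}$ is a sum of independent Bernoullis with $\proba(U_k > T_n) = (k-1-T_n)/(k-1)$, one computes
\[ \E[|\mathcal B|] = \sum_{k=T_n+1}^{T_{n+1}} \frac{k-1-T_n}{k-1} \sim \frac{M^2}{2 T_n} \sim \frac{T_{n+1}}{2}\, n^{-4/3}. \]
Chebyshev (and $\Var(|\mathcal B|) \leq \E[|\mathcal B|]$) gives $\proba(|\mathcal B| > 2\E[|\mathcal B|]) \leq 1/\E[|\mathcal B|] \sim n^{4/3}/T_{n+1}$, which is summable in $n$ because $T_n$ grows super-polynomially; Borel--Cantelli then yields $|\mathcal B|/T_{n+1} \leq 2 n^{-4/3}$ a.s.\ for all large $n$, and combined with the (much smaller) matching error on $\mathcal G$ one obtains the stated $3n^{-4/3}$ bound. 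The main obstacle will be to rigorously construct the coupling on $\mathcal G$ with \emph{exact} marginals: the naive identification $\Uc_{T_n} = U_k$, $\tilde\Delta_n = \Delta_k$ fails to give $\tilde\Delta_n$ the continuous law of $\Delta$, since $(\Delta_k)_{k \in \mathcal G}$ forms an empirical discrete measure. I would resolve this by invoking Strassen's theorem so that only the uniform Borel inequality above must be checked, which follows from the bad-ball estimate together with empirical concentration of $(X_k)_{k \in \mathcal G}$ around the continuous ``new ball'' density.
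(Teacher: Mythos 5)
Your decomposition is the right one and the first half of your argument matches the paper's: the common ``old ball'' mass is coupled exactly, and your Chebyshev/Borel--Cantelli bound on $|\mathcal B|/T_{n+1}$ is exactly the role played by the paper's resampling step (Lemma \ref{modif}), with the same order $n^{-4/3}$. Likewise, your observation that, given $(X_i)_{i\leq T_n}$ and the good/bad pattern, the points $(X_k)_{k\in\mathcal G}$ are conditionally i.i.d.\ with law $\frac{1}{T_n}\sum_{j\leq T_n}\delta_{X_j}\ast\mathcal L(\Delta)$ is precisely the mechanism the paper exploits.

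The gap is that the step you call ``a standard empirical-measure concentration argument'' is in fact the heart of the proof, and as stated it does not go through. First, you invoke absolute continuity of $\mathcal L(\Delta)$; the paper assumes only the stable-domain convergence \eqref{eq:CVloi}, so $\Delta$ may be lattice-valued, and no density exists. (Absolute continuity is also unnecessary: one should compare the two measures after coarse-graining into the boxes $\square^n_{i_1,\dots,i_d}$ of side $n^{-2}$, which is how the paper turns the Strassen-type inequality you want into the total-variation bound of Proposition \ref{Main2}.) Second, verifying $\mu(A)\leq\nu(A^{n^{-2}})+\e_n$ uniformly over Borel sets $A$, almost surely in $\Xb$ and for all large $n$, requires (i) a per-box concentration bound (Bernstein) together with a union bound and Borel--Cantelli over the boxes, which forces you to restrict to polynomially many boxes, say those in $[-n^\gamma,n^\gamma]^d$ (Lemmas \ref{bulk} and \ref{bulk2} in the paper), and (ii) a separate tail estimate showing that, a.s.\ for large $n$, the conditional mass outside $[-n^\gamma,n^\gamma]^d$ is $o(n^{-4/3})$; this is not automatic, and the paper proves it (Lemma \ref{tail}) using the stable tail bound \eqref{eq:tail} and the representation $X_n\stackrel{(d)}{=}\sum_{i\leq n}B_i\Delta_i$ with $B_i$ Bernoulli of parameter $1/i$. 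Your sketch contains neither the discretized union-bound argument nor any tail control, so the claimed coupling on $\mathcal G$ with failure probability $\ll n^{-4/3}$ is asserted rather than proved; filling it in amounts to reproving Proposition \ref{Main2} of the paper.
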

Note that, in the above, the existence of the coupling is random as we see for $n\in \N$, the law of $\Xc_{T_n}+Y_n$ and of $\Xc_{T_{n+1}}$ as random variables on the set of probability measure on $\R^d$. And those laws are determined by $(X_i)_{i\in \N}$. We also consider the $(U_i)_{i\in \N}$ for technical reasons.

To prove Proposition \ref{Main1} we split $\R^d$ into small box of the form, for $i_1,\dots, i_d\in \Z$,
\[ \square^n_{i_1,i_2,\dots, i_d}:= \left [\frac{i_1}{n^2},\frac{i_1+1}{n^2} \right )\times \left [\frac{i_2}{n^2},\frac{i_2+1}{n^2} \right )\times \dots \times \left [\frac{i_d}{n^2},\frac{i_d+1}{n^2} \right ). \] 
And we count given $(X_i)_{1\leq i \leq T_{n}}$ the number of $(X_i)_{i \leq T_{n+1}}$ that are in the small box $\square^n_{i_1,i_2,\dots, i_d}$. Note that this number is exactly equal to 
\begin{equation} T_{n+1}\proba \left ( \left . \Xc_{T_{n+1}} \in \square^n_{i_1,i_2,\dots, i_d}\right  |\Xb\right ). \label{8/4/17h} \end{equation}
Morally, to estimate this number we focus on the $T_n< i\leq T_{n+1}$ such that $U_i\leq T_n$. And for all $T_n< i\neq j \leq T_{n+1}$, given  $U_i,U_j\leq T_n$ and $(X_i)_{1\leq i \leq T_{n}}$, the conditional random variables $X_i,X_j$ are independent and have law  $\Xc_{T_n}+\Delta$. As a result, 
 the number \eqref{8/4/17h} is essentially a sum of independent random variables and so is concentrated, and we have the following approximation,
\[ \proba \left (\left .  \Xc_{T_{n+1}} \in \square^n_{i_1,i_2,\dots, i_d}\right |  \Xb  \right )\approx  \proba \left ( \left . \Xc_{T_n}+Y_n \in \square^n_{i_1,i_2,\dots, i_d} \right | \Xb \right ). \]
Rigorously, we show the next result which directly implies the existence of the desired couplings.
\begin{proposition} \label{Main2} Almost surely as $n\to \infty$
\[ \sum_{i_1,i_2,\dots, i_d\in \Z^d} \left |\proba \left ( \left . \Xc_{T_{n+1}} \in \square^n_{i_1,i_2,\dots, i_d} \right |\Xb  \right )- \proba \left ( \left . \Xc_{T_n}+Y_n \in \square^n_{i_1,i_2,\dots, i_d} \right |\Xb  \right )  \right | \leq 3/n^{4/3}.\]
\end{proposition}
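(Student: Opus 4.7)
\emph{Proof plan.} The plan is to rewrite the difference of probabilities as an empirical-vs-expected count, condition on the configuration at time $T_n$, and then apply multinomial concentration, while separately controlling the "non-fresh" balls.

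First I would use $Y_n=R_n\tilde\Delta_n$ with $R_n\sim\mathrm{Bernoulli}((T_{n+1}-T_n)/T_{n+1})$ to compute, for each box $\square$,
\[ P(\Xc_{T_{n+1}}\in\square\mid\Xb)-P(\Xc_{T_n}+Y_n\in\square\mid\Xb) = \frac{M_n}{T_{n+1}}\bigl(N_\square^n/M_n-p_\square^n\bigr), \]
with $M_n:=T_{n+1}-T_n\sim T_{n+1}/n^{2/3}$, $N_\square^n:=\#\{T_n<i\leq T_{n+1}:X_i\in\square\}$, and $p_\square^n:=P(\Xc_{T_n}+\Delta\in\square\mid\Xb)$. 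Thus Proposition \ref{Main2} is equivalent to $\sum_\square\bigl|N_\square^n-M_np_\square^n\bigr|\leq 3\,T_{n+1}/n^{4/3}$ eventually a.s. Splitting the indices $(T_n,T_{n+1}]=A_n\sqcup B_n$ according to whether $U_i\leq T_n$ and applying the triangle inequality reduces this to $\sum_\square\bigl|N_\square^A-|A_n|p_\square^n\bigr|+2|B_n|\leq 3\,T_{n+1}/n^{4/3}$ where $N_\square^A, N_\square^B$ are the obvious counts; since $|B_n|$ is a sum of independent Bernoullis with parameters $(i-1-T_n)/(i-1)$ and mean $M_n^2/(2T_n)\sim T_{n+1}/(2n^{4/3})$, Chernoff and Borel--Cantelli give $2|B_n|\leq 1.1\,T_{n+1}/n^{4/3}$ eventually a.s.

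The key structural observation, already emphasized before the statement, is that conditionally on $\Xb_{\leq T_n}:=((X_i,U_i))_{i\leq T_n}$ and on $A_n$, the variables $(X_i)_{i\in A_n}$ are i.i.d.\ with common law $\Xc_{T_n}+\Delta$: $U_i$ conditioned on $\{U_i\leq T_n\}$ is uniform on $\{1,\dots,T_n\}$ and the $\Delta_i$'s are fresh. So each $N_\square^A$ is $\mathrm{Bin}(|A_n|,p_\square^n)$, and the variance bound $\E|N_\square^A-|A_n|p_\square^n|\leq\sqrt{|A_n|p_\square^n}$ yields
\[ \E\Bigl[\sum_\square\bigl|N_\square^A-|A_n|p_\square^n\bigr|\,\Bigm|\,\Xb_{\leq T_n},A_n\Bigr]\leq\sqrt{|A_n|}\sum_\square\sqrt{p_\square^n}. \]
The left-hand side is a $2$-Lipschitz function of the $|A_n|$ independent $X_i$'s, so McDiarmid's inequality bounds its upper tail past its mean plus $\sqrt{|A_n|\log n}$ with probability summable in $n$.

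The hard part will be to show that $\sum_\square\sqrt{p_\square^n}$ is at most polynomial in $n$ on an event of summable complement: once this holds, since $\sqrt{|A_n|}\sim\sqrt{T_{n+1}}/n^{1/3}$ is super-polynomially larger than any polynomial, one has $\sqrt{|A_n|}\cdot\mathrm{poly}(n)=o(T_{n+1}/n^{4/3})$ super-polynomially and Markov plus Borel--Cantelli close the argument. To bound $\sum_\square\sqrt{p_\square^n}$, I would write $p_\square^n=T_n^{-1}\sum_{j\leq T_n}P(\Delta\in\square-X_j\mid\Xb)$, truncate $\Delta$ at a polynomial threshold $R_n$ (the induced extra $\ell^1$-cost in the proposition being $o(n^{-4/3})$ by \eqref{eq:tail}), and use the classical $O(\log T_n)=O(n^{1/3})$ bound on the height of the recursive tree together with stable random-walk estimates to control the empirical $2s$-moment $T_n^{-1}\sum_j\|X_j\|^{2s}$ for some $s$ with $d<2s<1/\alpha$; a weighted Cauchy--Schwarz $\sum_\square\sqrt{p_\square^n}\leq (\sum_\square(1+\|\square\|/R_n)^{-2s})^{1/2}(\E[(1+\|\Xc_{T_n}+\Delta\|/R_n)^{2s}\mid\Xb])^{1/2}$ then delivers the polynomial bound. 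Since \eqref{eq:tail} is the minimal tail information compatible with the stable regime, balancing the truncation level $R_n$, the weight exponent $s$, and the moments of $\Delta$ is the real technical hurdle (and for heavy-tailed $\Delta$ in higher dimension an additional decomposition of the $X_j$'s into "typical" and "atypical" parts may be needed).
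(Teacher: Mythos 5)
Your reduction to $\sum_\square |N^n_\square - M_n p^n_\square|\le 3T_{n+1}/n^{4/3}$, the $A_n/B_n$ split (which plays the role of the paper's resampling of the parents $U_i>T_n$), the conditional i.i.d.\ structure of $(X_i)_{i\in A_n}$ given the past, and the bulk concentration (binomial $L^1$-bound plus McDiarmid, where the paper instead uses per-box Bernstein with $t=\log^2 n$ and a union bound) are all sound and parallel the paper's Lemmas \ref{bulk}--\ref{bulk2}. The genuine gap is exactly the step you defer: bounding $\sum_{\square\in\Z^d}\sqrt{p^n_\square}$ polynomially. Your proposed route cannot work as stated. The weighted Cauchy--Schwarz needs an empirical/annealed moment of order $2s>d$, but in the stable regime \eqref{eq:CVloi} forces $\E\|\Delta\|^{2s}=\infty$ for every $2s>1/\alpha$, and $1/\alpha\le 2$; hence the window $d<2s<1/\alpha$ is empty for all $d\ge 2$, and also for $d=1$ whenever $1/\alpha\le 1$ (e.g.\ Cauchy increments). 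Truncating $\Delta$ alone does not rescue this, because $p^n_\square$ is the law of $\Xc_{T_n}+\Delta$ given $\Xb$, i.e.\ a convolution with the \emph{empirical} law of $(X_j)_{j\le T_n}$, whose far tail is precisely the quantity you have not yet controlled; and under \eqref{eq:tail} alone there is no moment of $\Delta$ of order $>1/\alpha$ to feed into a stable random-walk estimate. So as written the proposal does not establish the proposition.

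The repair is the paper's tail treatment, which avoids $\sum_\square\sqrt{p^n_\square}$ over all of $\Z^d$ altogether: split the boxes into those indexed by $\diamond_n=[-n^\gamma,n^\gamma]^d\cap\Z^d$ and the rest. Inside the window your argument goes through with plain Cauchy--Schwarz over the $O(n^{d\gamma})$ boxes, since any polynomial is negligible against $T_{n+1}/n^{4/3}$ with $T_{n+1}=e^{3n^{1/3}(1+o(1))}$. Outside the window, do not compare box by box: the total contribution is at most $\proba(\|\Xc_{T_{n+1}}\|>n^{\gamma-2}\mid\Xb)+\proba(\|\Xc_{T_n}+Y_n\|>n^{\gamma-2}\mid\Xb)$, and the paper shows both are a.s.\ $o(n^{-4/3})$ for $\gamma$ large, using Markov over the law of $\Xb$ plus Borel--Cantelli, together with the representation $X_m\stackrel{(d)}{=}\sum_{i\le m}B_i\Delta_i$ with $B_i$ Bernoulli$(1/i)$: with probability $1-o(\log(m)^{-15})$ there are at most $\log^2 m$ nonzero increments and, by \eqref{eq:tail}, none exceeds $\log(m)^{\gamma-2}$. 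This uses only \eqref{eq:tail}, no moments, and is where your ``typical/atypical decomposition'' should be replaced; with that substitution your bulk argument yields the proposition with the same constant budget.
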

\pagebreak[3]
\section{Proof of Proposition \ref{Main2}}
One can immediately deduce Proposition \ref{Main2}, with a union bound based on the two following lemmas, which will be proved in respectively Section \ref{Sec:Bulk} and Section \ref{Sec:Tail}. 
\begin{lemma}  \label{bulk} For $n\in \N$, let $\diamond_n:= [-n^\gamma,n^\gamma]^d\cap \Z^d$. Almost surely as $n\to \infty$
\[ \sum_{i_1,i_2,\dots, i_d\in \diamond_n} \left |\proba \left ( \left . \Xc_{T_{n+1}} \in \square^n_{i_1,i_2,\dots, i_d} \right |\Xb  \right )- \proba \left ( \left . \Xc_{T_n}+Y_n \in \square^n_{i_1,i_2,\dots, i_d} \right |\Xb  \right )  \right | \leq 2.9/n^{4/3}.\]
\end{lemma}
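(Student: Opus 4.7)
My plan is to expand each probability as a normalized empirical count, couple the new balls to an i.i.d.\ sample, and bound the residual by classical concentration. Direct expansion gives
\[ \proba(\Xc_{T_{n+1}}\in B\mid\Xb) = \frac{T_n}{T_{n+1}}\proba(\Xc_{T_n}\in B\mid\Xb) + \frac{N_B}{T_{n+1}} \]
with $N_B:=\sum_{i=T_n+1}^{T_{n+1}}\1_{X_i\in B}$, while conditioning on $R_n$ yields
\[ \proba(\Xc_{T_n}+Y_n\in B\mid\Xb) = \frac{T_n}{T_{n+1}}\proba(\Xc_{T_n}\in B\mid\Xb) + \frac{T_{n+1}-T_n}{T_{n+1}}\,p_B \]
with $p_B:=\proba(\Xc_{T_n}+\Delta\in B\mid\Xb)$. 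Subtracting, Lemma~\ref{bulk} reduces to proving a.s., eventually, $\sum_{(i_1,\dots,i_d)\in\diamond_n}|N_{\square^n_{i_1,\dots,i_d}}-(T_{n+1}-T_n)\,p_{\square^n_{i_1,\dots,i_d}}|\le 2.9\,T_{n+1}/n^{4/3}$.

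For the coupling, I would set $A_i:=\{U_i\le T_n\}$ for $T_n<i\le T_{n+1}$, introduce independent auxiliary uniforms $V_i$ on $\{1,\dots,T_n\}$, and define $\tilde U_i:=U_i\1_{A_i}+V_i\1_{A_i^c}$ and $X_i^{\ast}:=X_{\tilde U_i}+\Delta_i$. Then the $\tilde U_i$ are i.i.d.\ uniform on $\{1,\dots,T_n\}$, so conditional on $\Xb_{T_n}$ the $X_i^{\ast}$ are i.i.d.\ with law $\Xc_{T_n}+\Delta$, and $N_B^{\ast}:=\sum_i\1_{X_i^\ast\in B}$ is conditionally $\mathrm{Bin}(T_{n+1}-T_n,p_B)$. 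Since $X_i=X_i^\ast$ on $A_i$, one has $\sum_B|N_B-N_B^\ast|\le |A^c|:=\sum_i\1_{A_i^c}$, and the triangle inequality then gives
\[ \sum_{B}|N_B-(T_{n+1}-T_n)p_B|\;\le\;|A^c|\;+\;\sum_{B}|N_B^\ast-(T_{n+1}-T_n)p_B|. \]

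For the first term, $|A^c|$ is a sum of independent Bernoullis with $\E[|A^c|]=\sum_{i=T_n+1}^{T_{n+1}}(i-1-T_n)/(i-1)\sim(T_{n+1}-T_n)^2/(2T_n)$; using $(T_{n+1}-T_n)/T_n\sim n^{-2/3}$ this gives $\E[|A^c|]/T_{n+1}\sim 1/(2n^{4/3})$, and a multiplicative Chernoff bound combined with Borel--Cantelli (using that $\E|A^c|$ grows super-polynomially) upgrades this to $|A^c|/T_{n+1}\le(1/2+o(1))/n^{4/3}$ almost surely. For the second term, Cauchy--Schwarz bounds it by $\sqrt{|\diamond_n|\,\sum_B(N_B^\ast-(T_{n+1}-T_n)p_B)^2}$, whose conditional second moment is $(T_{n+1}-T_n)\sum_B p_B(1-p_B)\le T_{n+1}-T_n$; since $|\diamond_n|=O(n^{\gamma d})$ and $T_{n+1}\asymp e^{3n^{1/3}}$, dividing by $T_{n+1}$ leaves a quantity that is super-polynomially small, and a further Markov + Borel--Cantelli step makes it negligible compared to $n^{-4/3}$ almost surely.

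The main obstacle is not the structure but the bookkeeping of constants: one has to justify that the coefficient of $n^{-4/3}$ in the a.s.\ bound on $|A^c|/T_{n+1}$ is exactly $1/2+o(1)$ rather than some larger $O(1)$, which requires the precise expansion $(T_{n+1}-T_n)/T_n=n^{-2/3}+O(n^{-4/3})$ together with a careful multiplicative Chernoff estimate on the tails of $|A^c|$. With the constants tracked, the total bulk discrepancy is $(1/2+o(1))/n^{4/3}$, comfortably under $2.9/n^{4/3}$, leaving the remaining $0.1/n^{4/3}$ budget for the tail estimate treated in Section~\ref{Sec:Tail}.
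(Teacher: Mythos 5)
Your proof is correct and follows essentially the same route as the paper: your resampled $X_i^{\ast}$ is exactly the paper's modified variables $\Xf^n_i$, your control of $|A^c|$ is the content of Lemma~\ref{modif}, and your Cauchy--Schwarz plus second-moment/Markov treatment of the binomial fluctuations is a mild (if anything simpler) variant of the paper's per-box Bernstein bound in Lemma~\ref{bulk2}, both resting on $T_{n+1}=e^{3n^{1/3}(1+o(1))}$ dwarfing the polynomially many boxes. One small slip: a resampled index can move a point from one box to another, so over disjoint boxes $\sum_{B}|N_B-N_B^{\ast}|\le 2|A^c|$ rather than $|A^c|$, which turns your $(1/2+o(1))n^{-4/3}$ contribution into $(1+o(1))n^{-4/3}$ --- still comfortably below $2.9/n^{4/3}$.
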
 
\begin{lemma} \label{tail} If $\gamma>0$ is large enough, almost surely as $n\to \infty$
\[  \proba \left ( \left . \| \Xc_{T_{n+1}} \|>n^\gamma  \right |\Xb  \right ) =o(n^{-4/3}) \quad \text{and} \quad \proba \left ( \left . \| \Xc_{T_n}+Y_n \|>n^\gamma \right |\Xb  \right ) =o(n^{-4/3}).\]
\end{lemma}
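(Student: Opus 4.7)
The plan is to reduce Lemma \ref{tail} to a counting estimate. Since, conditional on $\Xb$, the variable $\Xc_{T_{n+1}}=X_{\Uc_{T_{n+1}}}$ is uniform on $\{X_1,\dots,X_{T_{n+1}}\}$, one has
\[
\proba\bigl(\|\Xc_{T_{n+1}}\|>n^\gamma\bigm|\Xb\bigr)=\frac{|A_n|}{T_{n+1}},\qquad A_n:=\{i\le T_{n+1}:\|X_i\|>n^\gamma\},
\]
so the first statement reduces to showing $|A_n|=o(T_{n+1}\,n^{-4/3})$ almost surely, which I would establish by applying Markov and Borel--Cantelli to a first-moment estimate of $|A_n|$.

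The argument relies on two standard facts about the uniform random recursive tree (URRT) encoded by $(U_i)$. First, by Devroye's height bound, the maximum depth $H_{T_{n+1}}$ in the URRT on $T_{n+1}$ vertices is at most $(e+o(1))\log T_{n+1}\le Cn^{1/3}$ almost surely, for some constant $C$ and all $n$ large enough. Second, a short induction shows $\proba(v \text{ is an ancestor of } i)=1/v$ for $v\le i$, so the subtree size $S_v$ of $v$ at time $T_{n+1}$ satisfies $\E[S_v]\le T_{n+1}/v$. Writing $X_i=\sum_{j\in\pi(i)}\Delta_j$ with $\pi(i)$ the root-to-$i$ path, the depth control forces the key implication: on the event $E_n:=\{H_{T_{n+1}}\le Cn^{1/3}\}$,
\[
i\in A_n \ \implies\ \exists\, j\in\pi(i):\ \|\Delta_j\|>n^\gamma/|\pi(i)|\ge n^{\gamma-1/3}/C.
\]
Hence, on $E_n$, $A_n$ is contained in the union of subtrees rooted at the ``bad'' vertices $\mathcal B_n:=\{v\le T_{n+1}:\|\Delta_v\|>n^{\gamma-1/3}/C\}$.

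Using independence of the $(\Delta_v)$ from the tree structure together with \eqref{eq:tail}, this inclusion yields
\[
\E\bigl[|A_n|\,\mathbf{1}_{E_n}\bigr]\le \proba\bigl(\|\Delta\|>n^{\gamma-1/3}/C\bigr)\sum_{v=1}^{T_{n+1}}\E[S_v] = o\bigl(n^{-(\gamma-1/3)/\beta}\bigr)\cdot T_{n+1}\cdot O(n^{1/3}).
\]
Choosing $\gamma$ so large that $(\gamma-1/3)/\beta>8/3+\varepsilon$ for some small $\varepsilon>0$, Markov's inequality gives $\proba(|A_n|\mathbf{1}_{E_n}>T_{n+1}\,n^{-4/3-\varepsilon})=o(n^{-1-\varepsilon})$, which is summable in $n$; combined with the almost sure eventual occurrence of $E_n$, Borel--Cantelli yields $|A_n|=o(T_{n+1}\,n^{-4/3})$ almost surely. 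The analog at $T_n$ follows immediately from $A_n\supset\{i\le T_n:\|X_i\|>n^\gamma\}$ and $T_n\sim T_{n+1}$.

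For the second statement I would condition on $R_n$. When $R_n=0$, $Y_n=0$ and the claim reduces to the previous paragraph applied at $T_n$. When $R_n=1$, an event of probability $O(n^{-2/3})$, the triangle inequality gives
\[
\proba\bigl(\|\Xc_{T_n}+\tilde\Delta_n\|>n^\gamma\bigm|\Xb\bigr)\le \proba\bigl(\|\Xc_{T_n}\|>n^\gamma/2\bigm|\Xb\bigr)+\proba\bigl(\|\tilde\Delta_n\|>n^\gamma/2\bigr),
\]
where the first term is $o(n^{-4/3})$ a.s.\ and the second is $o(n^{-\gamma/\beta})$ by \eqref{eq:tail}, both negligible for $\gamma$ large. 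The main obstacle is the first statement: any naive bound on $\max_i\|X_i\|$ grows exponentially in $n^{1/3}$, because a single anomalously large $\Delta_j$ contaminates its entire subtree, so the argument must genuinely \emph{count} contaminated walks rather than bound the maximum. The $1/v$ decay of ancestor probabilities combined with the polynomial tail of $\Delta$ is precisely what keeps this count negligibly small provided $\gamma$ is chosen large enough relative to~$\beta$.
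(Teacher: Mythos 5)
Your proof is correct, but it takes a genuinely different route from the paper's. Both arguments start from the same identity $\proba\left(\left.\|\Xc_{T_{n+1}}\|>n^\gamma\right|\Xb\right)=\frac1{T_{n+1}}\#\{i\le T_{n+1}:\|X_i\|>n^\gamma\}$ and both finish with a first-moment bound plus Markov and Borel--Cantelli; the difference is how the expected count is controlled. The paper ignores the tree geometry entirely: it sets $E_n:=\E\left[\proba\left(\left.\|\Xc_{T_{n+1}}\|>n^\gamma\right|\Xb\right)\right]=\frac1{T_{n+1}}\sum_i\proba(\|X_i\|>n^\gamma)$, accepts a factor $n^2$ of slack from Markov, and bounds the marginal tail of a single $X_i$ via the representation $X_i\overset{(d)}{=}\sum_{j\le i}B_j\Delta_j$ with independent Bernoulli$(1/j)$ variables, splitting according to whether the number of nonzero increments exceeds $\log(i)^2$ (Bernstein) or a single increment exceeds $\log(i)^{\gamma-2}$ (via \eqref{eq:tail}). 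You instead control all walks simultaneously through the recursive-tree genealogy: the a.s.\ height bound $O(\log T_{n+1})=O(n^{1/3})$ for the uniform random recursive tree, the ancestor probability $1/v$ (hence $\E[S_v]\le T_{n+1}/v$), and the inclusion of the bad set in the union of subtrees hanging below a large increment. Your version gives a uniform, pathwise geometric picture, at the price of importing the URRT height estimate (Pittel/Devroye) as an external input; the paper's version is more self-contained (only Bernstein and \eqref{eq:tail}) and, since it only ever needs the marginal laws of the $X_i$, it bypasses the subtree/correlation issue you raise at the end --- the concentration of $\sum_j B_j$ around $\log i$ plays, vertex by vertex, exactly the role of your global height bound. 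One bookkeeping remark: with $(\gamma-1/3)/\beta>8/3+\varepsilon$ your Markov bound is only $o(n^{-1-\delta})$ with $\delta$ the slack beyond $8/3+\varepsilon$, not automatically $o(n^{-1-\varepsilon})$; this is immaterial since $\gamma$ may be taken as large as you like. Your handling of the second statement (triangle inequality plus \eqref{eq:tail} for $Y_n$) matches the paper's one-line reduction.
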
 
\subsection{The bulk : Proof of Lemma \ref{bulk}} \label{Sec:Bulk}
Before proving Lemma \ref{bulk}, let us recall a consequence of Bernstein's inequality tailored for our purpose (see e.g. \cite{Massart} Section 2.8).
\begin{lemma}[Bernstein's inequality] \label{Bernstein} Let $n\in \N$. Let $(X_i)_{1\leq i \leq n}$ be  independent Bernoulli random variables with parameter $(p_i)_{1\leq i \leq n}$. Let $v\geq \sum_{i=1}^n p_i$. Let $S=\sum_{i=1}^n X_i-p_i$. For all $t\geq 0$,
\[ \proba(|S|\geq \sqrt{2vt}+t)\leq e^{-t}. \]
\end{lemma}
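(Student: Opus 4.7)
The statement is the standard Bernstein-type concentration inequality for sums of independent bounded variables, specialized to the Bernoulli case. The plan is the classical Cram\'er--Chernoff argument based on the moment generating function.

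First, I would reduce to a one-sided estimate via Markov applied to $e^{\lambda S}$: for $\lambda \in (0,3)$ and any $a > 0$,
\[ \proba(S \geq a) \leq e^{-\lambda a}\,\E\bigl[e^{\lambda S}\bigr] = e^{-\lambda a}\, \prod_{i=1}^n \E\bigl[e^{\lambda(X_i - p_i)}\bigr]. \]
A direct computation on each centered Bernoulli factor gives $\E[e^{\lambda(X_i-p_i)}] = e^{-\lambda p_i}(1 + p_i(e^\lambda - 1))$, and the elementary inequality $\log(1+x) \leq x$ yields
\[ \log \E\bigl[e^{\lambda(X_i - p_i)}\bigr] \leq p_i\,\phi(\lambda), \qquad \phi(\lambda) := e^\lambda - 1 - \lambda \geq 0. \]
Since $\phi \geq 0$, independence and the hypothesis $\sum_i p_i \leq v$ give $\E[e^{\lambda S}] \leq \exp(v\,\phi(\lambda))$.

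The analytic heart is the power-series bound $\phi(\lambda) \leq \frac{\lambda^2/2}{1 - \lambda/3}$ on $[0,3)$, which follows from $k! \geq 2 \cdot 3^{k-2}$ for $k \geq 2$. Optimizing the resulting exponent $-\lambda a + v\lambda^2/(2(1-\lambda/3))$ at $\lambda^\star = a/(v + a/3) \in (0,3)$ yields the Bennett--Bernstein form
\[ \proba(S \geq a) \leq \exp\!\left(-\frac{a^2}{2(v + a/3)}\right). \]
Setting $a = \sqrt{2vt} + t$, an algebraic check gives $a^2 = 2vt + 2t\sqrt{2vt} + t^2 \geq 2vt + \tfrac{2}{3}t\,a$, i.e.\ the exponent is $\leq -t$, so $\proba(S \geq \sqrt{2vt}+t) \leq e^{-t}$.

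For the lower tail I would repeat the Chernoff argument applied to $-S$ with $\lambda > 0$, noting that for the centered Bernoulli one even has the sub-Gaussian bound $\log \E[e^{-\lambda(X_i-p_i)}] \leq p_i\lambda^2/2$, giving the stronger $\proba(S \leq -\sqrt{2vt}) \leq e^{-t}$. Adding the two tails yields the two-sided estimate in the standard form used in Massart~\cite{Massart}. There is no genuine obstacle: the entire proof hinges on the one sharp MGF bound $\phi(\lambda) \leq \lambda^2/(2(1-\lambda/3))$ and the verification of the substitution $a = \sqrt{2vt} + t$, both of which are purely computational.
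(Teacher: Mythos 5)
Your Cram\'er--Chernoff computation is correct as far as it goes: the bound $\log\E[e^{\lambda(X_i-p_i)}]\le p_i\phi(\lambda)$, the estimate $\phi(\lambda)\le \frac{\lambda^2/2}{1-\lambda/3}$ on $[0,3)$, the choice $\lambda^\star=a/(v+a/3)$, the verification that $a=\sqrt{2vt}+t$ makes the exponent at most $-t$, and the sub-Gaussian bound for the lower tail are all fine. Note also that the paper does not prove this lemma at all: it cites Boucheron--Lugosi--Massart, Section 2.8, whose canonical statement is the one-sided $\proba(S\ge\sqrt{2vt}+t/3)\le e^{-t}$, so any self-contained proof is already more than the paper offers.

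The gap is in your last sentence. ``Adding the two tails'' yields $\proba(|S|\ge\sqrt{2vt}+t)\le 2e^{-t}$, whereas the lemma as stated claims the constant $1$. This is not a loss that the rest of your argument can absorb: in the regime where $v$ is much larger than $t^3$ (many small $p_i$, central-limit regime), your upper-tail bound $\exp\bigl(-a^2/(2(v+a/3))\bigr)$ and your lower-tail bound $\exp\bigl(-a^2/(2v)\bigr)$ both tend to $e^{-t}$, so no union bound built from these Chernoff estimates can give anything better than $2e^{-t}$; the extra additive $t$ in the threshold does not buy a uniform factor $\tfrac12$. Reaching the constant $1$ would require a genuinely sharper ingredient (for instance a Gaussian-type tail bound with its $\tfrac12$ prefactor, which Chernoff bounds do not see), or one should simply state the lemma with $2e^{-t}$. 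You should either prove the $2e^{-t}$ version and observe that it suffices --- every application in the paper (Lemmas \ref{modif} and \ref{bulk2}) only needs summability of the failure probabilities for Borel--Cantelli, so an absolute constant in front of $e^{-t}$ is harmless --- or flag explicitly that the constant $1$ in the statement is not what your argument (or the cited reference) delivers.
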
 
To prove Lemma \ref{bulk}, let us first consider a modification of $\Xc_{T_{n+1}}$. For $n\in \N$, let $(\Uf^n_i)_{i\in \N}$ be a family of independent uniform random variables in $\{1,\dots,T_n\}$. Then for $i\in \N$ let $U_i^n:=U_i$ if $U_i\leq T_n$ and let $U_i^n:=\Uf^n_i$ if $U_i>T_n$. (Morally we resample $U_i$ until we are below $T_n$.) And let 
%\[ \Xf^n_{i}:= \begin{cases} X_i & \text{ if } i\leq T_n
%\\ X_{U_i}+\Delta_i & \text{ if } i>T_n \text{ and } U_i\leq T_n
%\\ X_{\Uf^n_i}+\Delta_i & \text{ if } i>T_n \text{ and } U_i> T_n
%\end{cases} 
%\]
\begin{equation} \Xf^n_i: = X_i \quad \text{if} \quad  i\leq T_n\quad \text{and let } \quad  \Xf^n_{i}:= X_{U_i^n}+\Delta_{i}  \quad \text{if} \quad i>T_n. \label{22/09/11h} \end{equation}
 Then let $\Xf_{T_{n+1}}:= \Xf_{\Uc_{T_{n+1}}} $. 
The main  interest of this modification is that given $(X_i)_{1\leq i \leq T_n}$, the random variables $(\Xf^n_i)_{T_n<i\leq T_{n+1}}$ are identically distributed and independent. Moreover we have:
\begin{lemma} \label{modif} Almost surely as $n\to \infty$,
\[ \proba(\Xf_{T_{n+1}}\neq \Xc_{T_{n+1}}|\Xb) =(1+o(1))n^{-4/3}. \]
\end{lemma}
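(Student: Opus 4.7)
The plan is to expand $\proba(\Xf_{T_{n+1}}\neq \Xc_{T_{n+1}}\mid \Xb)$ explicitly by conditioning on $\Uc_{T_{n+1}}$, reduce it to a count of independent Bernoulli events, and apply Bernstein's inequality combined with Borel--Cantelli for the almost-sure statement.

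First I would observe that $\Xf^n_i = X_i$ for every $i\leq T_n$ by construction, so any discrepancy requires $\Uc_{T_{n+1}} = k$ with $k>T_n$. For such $k$, $\Xf^n_k = X_{U^n_k}+\Delta_k$ agrees with $X_k = X_{U_k}+\Delta_k$ whenever $U_k\leq T_n$ (since then $U^n_k = U_k$), whereas if $U_k>T_n$ then $U^n_k = \Uf^n_k$ is a fresh uniform on $\{1,\dots,T_n\}$ independent of $\Xb$. Averaging over $\Uc_{T_{n+1}}$ and $(\Uf^n_k)$ conditionally on $\Xb$, I obtain
\[
\proba(\Xf_{T_{n+1}}\neq \Xc_{T_{n+1}}\mid \Xb) \;=\; \frac{1}{T_{n+1}}\sum_{k\in B_n}(1-q_k),
\]
where $B_n := \{k\in(T_n,T_{n+1}] : U_k>T_n\}$ and $q_k := |\{j\leq T_n : X_j = X_{U_k}\}|/T_n \in [0,1]$ is the conditional probability of a value coincidence.

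Next I would handle the main term $|B_n|/T_{n+1}$. The indicators $\mathbf{1}\{U_k>T_n\}$ for $k\in(T_n,T_{n+1}]$ are independent Bernoulli with parameters $p_k = (k-1-T_n)/(k-1)$, so $|B_n|$ is a sum of independents. Using $\log(T_{n+1}/T_n) = n^{-2/3}(1+o(1))$, a direct expansion of $\sum p_k$ (noting the telescoping $\sum_{k=T_n+1}^{T_{n+1}}(k-1-T_n)/(k-1) = (T_{n+1}-T_n) - T_n(H_{T_{n+1}-1}-H_{T_n-1})$) gives $\E|B_n|/T_{n+1}$ of order $n^{-4/3}$ with the prefactor matching the claim. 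Applying Lemma~\ref{Bernstein} with $v = \E|B_n|$ and deviation $n^{-\varepsilon}\E|B_n|$ produces a tail bound of order $\exp(-c'' n^{-2\varepsilon}\E|B_n|)$, and since $\E|B_n| \asymp T_n n^{-4/3}$ grows like $e^{3n^{1/3}}$ this is summable in $n$; Borel--Cantelli then gives $|B_n| = (1+o(1))\E|B_n|$ almost surely.

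Finally I would argue that the coincidence correction $\frac{1}{T_{n+1}}\sum_{k\in B_n}q_k$ is of smaller order $o(n^{-4/3})$. When $\Delta$ has an absolutely continuous component the $X_i$ are a.s.\ pairwise distinct, so $q_k\equiv 0$ and the statement is immediate; more generally, for $j\asymp T_n$ the walk $X_j$ has made $\asymp \log T_n = \Theta(n^{1/3})$ steps, so a concentration-function estimate for walks in the stable domain of attraction gives $\sup_x \proba(X_j=x) = o(1)$, hence $q_k = o(1)$ uniformly in $k$ and the correction is negligible. This last local-limit input in the lattice case is the main technical hurdle; with it in hand, the three pieces combine to yield the claimed almost-sure asymptotic.
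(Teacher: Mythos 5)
Your reduction of the event $\{\Xf_{T_{n+1}}\neq \Xc_{T_{n+1}}\}$ to the count $|B_n|$ of indices $k\in(T_n,T_{n+1}]$ with $U_k>T_n$, followed by Bernstein (Lemma~\ref{Bernstein}) and Borel--Cantelli, is exactly the paper's argument; the paper simply bounds $1-q_k\le 1$, i.e.\ uses $\proba(\Xf^n_k\neq X_k\mid\Xb)\le \1_{U_k\in(T_n,T_{n+1}]}$, and only ever needs the resulting \emph{upper} bound (it is used solely to pass from Lemma~\ref{bulk2} to Lemma~\ref{bulk} by the triangle inequality). The problem is with the two extra steps you add in order to get a genuine asymptotic equality. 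First, your prefactor claim is wrong: the telescoping you yourself write gives
\[ \E|B_n|=\sum_{k=T_n+1}^{T_{n+1}}\frac{k-1-T_n}{k-1}=(T_{n+1}-T_n)-T_n\bigl(H_{T_{n+1}-1}-H_{T_n-1}\bigr)=\Bigl(\tfrac12+o(1)\Bigr)\frac{(T_{n+1}-T_n)^2}{T_{n+1}}, \]
so $\E|B_n|/T_{n+1}=(\tfrac12+o(1))n^{-4/3}$, not $(1+o(1))n^{-4/3}$. Carried out honestly, your exact formula therefore yields $(\tfrac12+o(1))n^{-4/3}$ minus the coincidence correction, which does not prove the displayed equality (read literally it contradicts it); it does, of course, give the upper bound $\le(1+o(1))n^{-4/3}$, which is what the paper's proof actually establishes and what is needed downstream.

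Second, the coincidence term is a genuine gap, not just a technicality. Given $\Xb$, $q_k$ is the \emph{empirical} fraction of $X_1,\dots,X_{T_n}$ equal to the particular (dependent) value $X_{U_k}$; the inference ``$\sup_x\proba(X_j=x)=o(1)$ hence $q_k=o(1)$ uniformly'' does not follow, because the points $X_j$, $j\le T_n$, are strongly dependent through the urn reinforcement (a single value can be duplicated many times by resampling old balls), and because an almost-sure statement valid for all large $n$ requires quantitative bounds with summable failure probabilities, which a qualitative local-limit $o(1)$ does not provide. You flag this as the main hurdle but leave it unresolved, so as written the proposal does not prove the two-sided statement it aims at. The efficient fix is the paper's: drop $q_k$ entirely (bound $1-q_k\le 1$), keep your Bernstein--Borel--Cantelli control of $|B_n|$, and state the conclusion as an upper bound of order $n^{-4/3}$, which is all that is ever used.
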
 
\begin{proof} First, note that, 
\begin{align*} \proba(\Xf_{T_{n+1}}\neq \Xc_{T_{n+1}}|\mathbf X) \leq \frac{1}{T_{n+1}}\sum_{T_n < i\leq T_{n+1}} \proba(\Xf^n_i \neq X_i |\mathbf X) 
 & \leq \frac{1}{T_{n+1}}\sum_{1\leq i\leq T_{n+1}} \proba(U^n_i\neq U_i|\mathbf X)
\\ & = \frac{1}{T_{n+1}}\sum_{T_n<i\leq T_{n+1}} \1_{U_i\in (T_n,T_{n+1} ]}. \end{align*}
The above sum is a sum of independent Bernoulli random variables with parameter bounded by $(T_{n+1}-T_n)/T_{n+1}$. We can thus apply Lemma \ref{Bernstein} with $v=(T_{n+1}-T_n)^2/T_{n+1}$ and $t=2\log(n)$: We get for every $n$ large enough since 
\[ (\sqrt{2vt}+t)/T_{n+1}\sim 2 \sqrt{\log n} (T_{n+1}-T_n)/T_{n+1}^{3/2} \ll n^{-5/3}, \]
that 
\begin{align*} \proba\left (\proba(\Xf_{T_{n+1}}\neq \Xc_{T_{n+1}}|\mathbf X)\geq \frac{v}{T_{n+1}}+n^{-5/3}\right ) & \leq \proba\Big (\frac{1}{T_{n+1}}\sum_{T_n<i\leq T_{n+1}} \1_{U_i\in (T_n,T_{n+1} ]} \geq \frac{v}{T_{n+1}}+n^{-5/3}\Big )
\\ & \leq \proba\Big (\sum_{T_n<i\leq T_{n+1}} \1_{U_i\in (T_n,T_{n+1} ]} -v\geq \sqrt{2vt}+t \Big )  \leq 1/n^2.\end{align*}
%\[ \proba(\proba(\Xf_{T_{n+1}}\neq \Xc_{T_{n+1}}|\mathbf X)\geq v+ n^{-5/3}) \leq \proba(\proba(\Xf_{T_{n+1}}\neq \Xc_{T_{n+1}}|\mathbf X)\geq v+ \sqrt{2vt}+t) \leq 1/n^2.  \]
The desired result follows from $v/T_{n+1}\sim n^{-4/3}$ and the Borel--Cantelli Lemma.
\end{proof} 
By Lemma \ref{modif} and the triangle inequality, to prove Lemma \ref{bulk}, it suffices to prove the next result:
\begin{lemma}  \label{bulk2} Almost surely as $n\to \infty$
\[ \sum_{i_1,i_2,\dots, i_d\in \diamond_n} \left |\proba \left ( \left . \Xf_{T_{n+1}} \in \square^n_{i_1,i_2,\dots, i_d} \right |\Xb  \right )- \proba \left ( \left . \Xc_{T_n}+Y_n \in \square^n_{i_1,i_2,\dots, i_d} \right |\Xb  \right )  \right | \leq 1.8/n^{4/3}.\]
\end{lemma}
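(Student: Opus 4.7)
My plan is to bound, for each box $\square=\square^n_{i_1,\dots,i_d}$, the signed difference $D(\square):=\proba(\Xf_{T_{n+1}}\in\square\mid\Xb)-\proba(\Xc_{T_n}+Y_n\in\square\mid\Xb)$ by a Bernstein-type concentration argument and then to union bound over the $|\diamond_n|=O(n^{\gamma d})$ boxes. Unfolding the definition \eqref{22/09/11h} and setting
\[ p_\square:=\tfrac{1}{T_n}\sum_{j=1}^{T_n}\proba(X_j+\Delta\in\square\mid X_j),\qquad q_i(\square):=\tfrac{1}{T_n}\sum_{j=1}^{T_n}\1_{X_j+\Delta_i\in\square}, \]
a direct computation of both conditional probabilities yields
\[ T_{n+1}D(\square)=\sum_{T_n<i\le T_{n+1}}W_i-(T_{n+1}-T_n)p_\square,\ \ W_i:=\1_{U_i\le T_n}\1_{X_i\in\square}+\1_{U_i>T_n}q_i(\square). \]
Conditionally on $(X_j)_{j\le T_n}$ the $W_i$'s are independent and centred at $p_\square$, but they are only $[0,1]$-valued rather than $\{0,1\}$-valued, so Lemma \ref{Bernstein} as stated cannot be applied to them directly. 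This is the main technical hurdle.

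To get around it, I would introduce the indicator lift $V_i:=\1_{\Xf_i^n\in\square}$. Conditionally on $(X_j)_{j\le T_n}$, the $(V_i)_{T_n<i\le T_{n+1}}$ are i.i.d.\ Bernoulli$(p_\square)$, while conditionally on $\Xb$ one checks that $W_i=\E[V_i\mid\Xb]$: $V_i=W_i$ on $\{U_i\le T_n\}$ (both equal $\1_{X_i\in\square}$), and on $\{U_i>T_n\}$ the variable $V_i$ is a Bernoulli of parameter $q_i(\square)=W_i$ in the auxiliary $\Uf_i^n$ (which is independent of $\Xb$ and independent across $i$). Writing
\[ T_{n+1}D(\square)=\Big(\sum_{T_n<i\le T_{n+1}}V_i-(T_{n+1}-T_n)p_\square\Big)-\sum_{T_n<i\le T_{n+1}:\,U_i>T_n}(V_i-q_i(\square)), \]
both bracketed terms are now centred sums of independent Bernoullis---the first conditionally on $(X_j)_{j\le T_n}$, the second conditionally on $\Xb$---with sums of parameters $(T_{n+1}-T_n)p_\square$ and $v''_\square:=\sum_{i:U_i>T_n}q_i(\square)$ respectively. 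Two applications of Lemma \ref{Bernstein} combined by the triangle inequality then give, with probability at least $1-2e^{-t}$,
\[ |T_{n+1}D(\square)|\le \sqrt{2(T_{n+1}-T_n)p_\square\,t}+\sqrt{2v''_\square\,t}+2t. \]

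Taking $t=C\log n$ with $C$ large enough, a union bound over the at most $(3n^\gamma)^d$ boxes of $\diamond_n$ and Borel--Cantelli ensure that a.s., for all $n$ large enough, the above inequality holds simultaneously for every $\square\in\diamond_n$. Summing $|D(\square)|$ and applying Cauchy--Schwarz together with the trivial bounds $\sum_{\square}p_\square\le 1$ and $\sum_{\square}v''_\square\le T_{n+1}-T_n$ (the latter from $\sum_\square q_i(\square)\le 1$) yields
\[ \sum_{\square\in\diamond_n}|D(\square)|\le \frac{2\sqrt{2t(T_{n+1}-T_n)|\diamond_n|}}{T_{n+1}}+\frac{2t|\diamond_n|}{T_{n+1}}. \]
Since $T_{n+1}\sim e^{3n^{1/3}}$ is super-polynomial in $n$ while $(T_{n+1}-T_n)/T_{n+1}\sim n^{-2/3}$, $|\diamond_n|=O(n^{\gamma d})$ and $t=O(\log n)$, the right-hand side is $o(n^{-4/3})$ by an enormous margin, comfortably below the $1.8/n^{4/3}$ required by Lemma \ref{bulk2}.
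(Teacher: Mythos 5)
Your proposal is correct, and its engine is the same as the paper's: a Bernstein bound (Lemma \ref{Bernstein}) for each box with $t$ of order $\log n$ (the paper takes $t=\log(n)^2$), a union bound over the $O(n^{\gamma d})$ boxes followed by Borel--Cantelli, and then Cauchy--Schwarz over the boxes together with the super-exponential growth of $T_{n+1}$ against the polynomial size of $\diamond_n$, which makes the final bound $o(n^{-4/3})$, far below the stated $1.8/n^{4/3}$. Where you genuinely differ is the decomposition. The paper applies Bernstein once, to the count $\sum_{i\le T_{n+1}}\1_{\Xf^n_i\in\square^n_{i_1,\dots,i_d}}$, identifying it with $T_{n+1}\proba\left(\Xf_{T_{n+1}}\in\square^n_{i_1,\dots,i_d}\mid\Xb\right)$ and centring it at $v^n_{i_1,\dots,i_d}$; strictly speaking that identification is exact only if the conditioning also carries the resampling variables $(\Uf^n_i)$, since given $\Xb$ alone the indices with $U_i>T_n$ contribute conditional probabilities rather than indicators. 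You compute $\proba\left(\Xf_{T_{n+1}}\in\cdot\mid\Xb\right)$ exactly, observe that the resulting weights $W_i$ are only $[0,1]$-valued, and resolve this with two Bernstein applications combined by the triangle inequality: one given $(X_j)_{j\le T_n}$ for the indicator lift $(V_i)$, one given $\Xb$ for the resampling fluctuation $\sum_{U_i>T_n}(V_i-q_i(\square))$. This costs an extra application of the lemma but treats the conditioning on $\Xb$ more scrupulously than the paper does, and your cancellation of the common indices $i\le T_n$ replaces the variance proxy $T_{n+1}\proba(\cdot)$ by the smaller $(T_{n+1}-T_n)p_{\square}$. Since the margin is enormous in both arguments, each closes comfortably; your version is a valid (and in one respect cleaner) rendering of the same proof.
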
 
\begin{proof} Note that conditionally on  $(X_i)_{1\leq i \leq T_n}$, for every $i_1,i_2\dots i_d\in \Z^n$,
\[ T_{n+1}\proba \left ( \left . \Xf_{T_{n+1}} \in \square^n_{i_1,i_2,\dots, i_d} \right |\Xb  \right )=\sum_{i=1}^{T_{n+1}}\1_{\Xf_i^n \in \square^n_{i_1,i_2,\dots, i_d}},\]
is a sum of independent Bernoulli random variables. Furthermore, conditionally on $(X_i)_{1\leq i \leq T_n}$, the random variables $\Xf_{T_{n+1}}$ and $ \Xc_{T_n}+Y_n$ have exactly the same law. As a result, 
\begin{align*} \frac{1}{T_{n+1}}\sum_{i=1}^{T_{n+1}}\proba\left (\left . \Xf_i^n \in \square^n_{i_1,i_2,\dots, i_d} \right |(X_i)_{1\leq i \leq T_n}\right ) & =\proba \left ( \left . \Xf_{T_{n+1}} \in \square^n_{i_1,i_2,\dots, i_d} \right |(X_i)_{1\leq i \leq T_n}  \right ) 
\\ & = \proba \left ( \left . \Xc_{T_n}+Y_n \in \square^n_{i_1,i_2,\dots, i_d} \right |\Xb  \right ) \end{align*}

Thus, we may apply Bernstein's inequality (Lemma \ref{Bernstein})  with
\[ v^n_{i_1,i_2,\dots, i_d}:=T_{n+1}\proba \left ( \left . \Xc_{T_n}+Y_n \in \square^n_{i_1,i_2,\dots, i_d} \right |\Xb  \right ) \]
to estimate 
\[ \delta^n_{i_1,i_2,\dots, i_d}:=\left |\proba \left ( \left . \Xf_{T_{n+1}} \in \square^n_{i_1,i_2,\dots, i_d} \right |\Xb  \right )- \proba \left ( \left . \Xc_{T_n}+Y_n \in \square^n_{i_1,i_2,\dots, i_d} \right |\Xb  \right ) \right |. \]
We choose for  $n\in \N$, $t_n:=\log(n)^2$, and we obtain for every $n\in \N$, $i_1,i_2,\dots, i_d\in \diamond_n$,
\[ \proba \left (T_{n+1}\delta^n_{i_1,i_2,\dots, i_d}>\sqrt{2v^n_{i_1,i_2,\dots, i_d}\log(n)^2}+\log(n)^2 \right )\leq n^{-\log(n)}. \]
This is summable over all $n\in \N$, $i_1,i_2,\dots, i_d\in \diamond_n$, so by the Borel--Cantelli lemma  almost surely for every $n$ large enough, $i_1,i_2,\dots, i_d\in \diamond_n$,
\[ T_{n+1} \delta^n_{i_1,i_2,\dots, i_d} \leq \sqrt{2v^n_{i_1,i_2,\dots, i_d}\log(n)^2}+\log(n)^2 . \]

Summing over all $i_1,i_2,\dots, i_d\in \diamond_n$ we get writing $|\diamond_n|$ for the cardinal of $\diamond_n$, and writing $\delta_n:=\sum_{i_1,i_2,\dots, i_d\in \diamond_n} \delta^n_{i_1,i_2,\dots, i_d}$ for the sum of the lemma,
\[ T_{n+1} \delta_n \leq \sum_{i_1,i_2,\dots, i_d\in \diamond_n} \sqrt{2v^n_{i_1,i_2,\dots, i_d}\log(n)^2}  +|\diamond_n| \log(n)^{2}. \]
Then using the concavity of $x\mapsto \sqrt{x}$,
\[ T_{n+1} \delta_n \leq \sqrt{|\diamond_n|} \left( 2  \sum_{i_1,i_2,\dots, i_d\in \diamond_n}  v^n_{i_1,i_2,\dots, i_d}\log(n)^2 \right )^{1/2}  +|\diamond_n| \log(n)^{2}. \]
Moreover, we have 
\[ \sum_{i_1,i_2,\dots, i_d\in \diamond_n}  v^n_{i_1,i_2,\dots, i_d} = T_{n+1} \sum_{i_1,i_2,\dots, i_d\in \diamond_n} \proba \left ( \left . \Xc_{T_n}+Y_n \in \square^n_{i_1,i_2,\dots, i_d} \right |\Xb  \right )\leq T_{n+1} .\]
Therefore, a.s. for every $n$ large enough,
\[ T_{n+1} \delta_n \leq \sqrt{|\diamond_n|} \sqrt{2T_{n+1}\log(n)^2}+ |\diamond_n| \log(n)^{2}. \]
The desired result follows as $ |\diamond_n|=O(n^{d \gamma})$ and $T_{n+1}=e^{3n^{1/3}(1+o(1))}$.
\end{proof}
\pagebreak[2]
\subsection{The tail : Proof of Lemma \ref{tail}} \label{Sec:Tail}
We now prove Lemma \ref{tail}. By \eqref{eq:tail}, note that if $\gamma>3\beta$ then $\proba(\|Y_n\| >n^\gamma)=O(n^{-2})$ so it suffices to prove the first part of the lemma. Also writing for $n\in \N$,
\[ E_n:= \E \left [\proba \left ( \left . \| \Xc_{T_{n+1}} \|>n^\gamma  \right |\Xb  \right ) \right ], \]
we have by Markov inequality for every $n\in \N$,
\[ \proba\left ( \proba \left ( \left . \| \Xc_{T_{n+1}} \|>n^\gamma  \right |\Xb  \right ) > n^2 E_n \right ) \leq 1/n^2 . \]
So almost surely for every $n\in \N$ large enough, $ \proba \left ( \left . \| \Xc_{T_{n+1}} \|>n^\gamma  \right |\Xb  \right ) \leq n^2 E_n$.

Hence it is enough to upper-bound $E_n$. We have, 
\[ E_n =\E \left [\proba \left ( \left . \| \Xc_{T_{n+1}} \|>n^\gamma  \right |\Xb  \right ) \right ]
 =\E \left [\frac{1}{T_{n+1}} \sum_{i=1}^{T_{n+1}} \1_{\| X_i\| >n^\gamma} \right ]
= \frac{1}{T_{n+1}} \sum_{i=1}^{T_{n+1}} \proba\left (\| X_i\| >n^\gamma\right ).
 \]
 Thus since $\log(T_n)\sim 3n^{1/3}$, to prove Lemma \ref{tail} it suffices to prove the following result.
 \begin{lemma} If $\gamma$ is large enough, as $n\to \infty$,
 \[ \proba\left (\| X_n\| >\log(n)^{\gamma}\right )=o(\log(n)^{-15}). \]
 \end{lemma}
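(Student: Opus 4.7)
The plan is to decompose $X_n$ along the random chain of uniform ancestors. Setting $V_0 := n$ and $V_{k+1} := U_{V_k}$ as long as $V_k \neq 1$, let $L_n$ be the first index $k$ with $V_k = 1$, so that unrolling $X_m = X_{U_m} + \Delta_m$ (with $X_1 = \Delta_1$) yields
\[ X_n = \sum_{k=0}^{L_n} \Delta_{V_k}. \]
Since $(\Delta_i)_{i\in\N}$ is independent of $(U_i)_{i\in\N}$, conditionally on the chain these summands are i.i.d.\ copies of $\Delta$. The triangle inequality together with a union bound then gives, for any $C \geq 1$,
\[ \proba(\|X_n\| > \log(n)^\gamma) \leq \proba(L_n > C \log n) + 2 C \log n \cdot \proba\bigl(\|\Delta\| > \tfrac{1}{2C}\log(n)^{\gamma-1}\bigr). \]

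For the second term I would simply feed the threshold into \eqref{eq:tail}: writing $m^\beta = \tfrac{1}{2C}\log(n)^{\gamma - 1}$ gives $\proba(\|\Delta\| > m^\beta) = o(m^{-1}) = o(\log(n)^{-(\gamma - 1)/\beta})$, and multiplying by $2C\log n$ produces $o(\log(n)^{1 - (\gamma - 1)/\beta})$. Choosing $\gamma$ so that $(\gamma - 1)/\beta > 16$ (for example any $\gamma > 16\beta + 1$) makes this $o(\log(n)^{-15})$.

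The main task is therefore the concentration of $L_n$; this is really the only potential obstacle. Conditioning on $V_1$ and solving the resulting recursion for $\phi_n(\theta) := \E[e^{\theta L_n}]$ gives
\[ \phi_n(\theta) = \prod_{k=1}^{n-1}\left(1 + \frac{e^\theta - 1}{k}\right), \]
which is exactly the MGF of $S_n := \sum_{k=1}^{n-1} B_k$ with $B_k \sim \text{Bern}(1/k)$ independent, so $L_n$ and $S_n$ have the same law. Since $\E S_n = H_{n-1} \sim \log n$, I would apply Bernstein's inequality (Lemma \ref{Bernstein}) to $S_n$ with $v = H_{n-1}$ and $t = 20 \log n$ to obtain $\proba(L_n > C \log n) \leq n^{-20}$ for $C$ sufficiently large, which is comfortably $o(\log(n)^{-15})$.
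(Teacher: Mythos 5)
Your proof is correct and follows essentially the same route as the paper: both reduce the event $\{\|X_n\|>\log(n)^\gamma\}$ to (i) the number of increments along the ancestral line exceeding order $\log n$ (the paper uses $\log(n)^2$), controlled by Bernstein's inequality for a sum of independent Bernoulli$(1/k)$ variables, and (ii) a single increment exceeding a power of $\log n$, controlled by a union bound and \eqref{eq:tail}. The only difference is that you re-derive, via the ancestor chain and its moment generating function, the representation of $X_n$ as a sum over a Bernoulli$(1/k)$-distributed number of i.i.d.\ copies of $\Delta$, which the paper simply invokes as well known.
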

 \begin{proof}
 it is well known that for every $n\in \N$, $X_n$ has the same law as $\sum_{i=1}^n B_i \Delta_i$, where $(B_i)_{i\in \N}$ is a family of Bernoulli random variables with parameter $(1/i)_{i\in \N}$ independent of $(\Delta_i)_{i\in \N}$. With this representation, for $X_n$ to be larger than $\log(n)^{\gamma}$, there must either be at least $\log(n)^2$ term or else one of the term is larger than $\log(n)^{\gamma-2} $. So,
 \[ \proba\left (\| X_n\| >\log(n)^{\gamma}\right )\leq \proba \left (\sum_{i=1}^n B_i>\log(n)^2 \right )+\log(n)^2 \proba \left (\Delta >\log(n)^{\gamma-2} \right ).   \]
 By classical concentration inequalities on sum of independent random variables the first right-hand side term is $o(\log(n)^{-15})$ as $n\to \infty$ (one may for instance use Bernstein's inequality). And by \eqref{eq:tail}, for $\gamma$ large enough,   $\proba \left (\Delta >\log(n)^{\gamma-2} \right )=o(\log(n)^{-17})$.
 \end{proof}
 \section{Proof of Theorem \ref{Thm}} \label{sec:Thm}
 Before proving Theorem \ref{Thm}, let us check two easy results. 
  \begin{lemma} \label{dTV} As $n\sim m\to \infty$, writing $d_{TV}$ for total variation distance, $d_{\TV}(\mu_n/n,\mu_m/m)\to 0$.
 \end{lemma}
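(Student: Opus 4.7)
The plan is to prove this by a direct algebraic decomposition of the signed measure $\mu_m/m - \mu_n/n$, and then use the triangle inequality on its total variation norm. Assume without loss of generality that $m\geq n$ (the other case is symmetric). Since $\mu_m = \mu_n + (\mu_m-\mu_n)$, I would write
\[ \frac{\mu_m}{m} - \frac{\mu_n}{n} = \left(\frac{1}{m}-\frac{1}{n}\right)\mu_n + \frac{1}{m}(\mu_m-\mu_n) = -\frac{m-n}{mn}\,\mu_n + \frac{1}{m}(\mu_m-\mu_n). \]
Both $\mu_n$ and $\mu_m - \mu_n = \sum_{i=n+1}^m \delta_{X_i}$ are nonnegative (integer-valued) measures, of total mass $n$ and $m-n$ respectively.

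Taking total variation and using the triangle inequality gives
\[ d_{\TV}\!\left(\tfrac{\mu_m}{m},\tfrac{\mu_n}{n}\right) \leq \frac{m-n}{mn}\cdot n + \frac{1}{m}\cdot (m-n) = \frac{2(m-n)}{m}. \]
Under the assumption $n\sim m$, we have $(m-n)/m \to 0$, which yields the claim.

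There is essentially no obstacle here; the only mild care required is keeping track of the convention for $d_{\TV}$ (up to the factor $1/2$ depending on whether one uses the sup-over-Borel-sets definition or the $L^1$-norm of the difference). In either convention the same bound works since the two terms in the decomposition are controlled by exactly the same quantity $(m-n)/m$, so the lemma follows from the assumption $n\sim m$ without any further input.
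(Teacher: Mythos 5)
Your proof is correct and is essentially the paper's argument in a slightly different dress: the paper bounds $d_{\TV}(\mu_n/n,\mu_m/m)$ by $1-n/m$ by observing that both measures put mass at least $1/m$ on each of the first $n$ atoms (an overlap/coupling phrasing via the uniformly chosen index), while your signed-measure decomposition quantifies the same overlap through the triangle inequality, losing only a harmless factor of $2$ that disappears anyway under the sup-over-Borel-sets convention. Both bounds are of order $(m-n)/m\to 0$ when $n\sim m$, so the two arguments are interchangeable.
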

 \begin{proof}[Proof of Lemma \ref{dTV}] Assume $n\leq m $. Since for every $1\leq i \leq n$, $\Xc_n=X_i$ with probability $1/n$ and  $\Xc_m=X_i$ with probability $1/m$ we get, $d_{\TV}(\mu_n/n,\mu_m/m)\leq 1-n/m \to 0.$
\end{proof}
 \begin{lemma} \label{cvloi2} As $n\to \infty$, writing $S_n:=Y_1+Y_2+\dots+Y_{n-1}$, $S_n/\log(T_n)^\alpha  \limit^{(d)} \L .$
 \end{lemma}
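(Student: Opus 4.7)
The plan is to reduce $S_n$ to a sum of a near-deterministic number of i.i.d.\ copies of $\Delta$ and then combine the hypothesis \eqref{eq:CVloi} with Slutsky's theorem. By joint independence of $(R_i)_{i\in\N}$ and $(\tilde\Delta_i)_{i\in\N}$, setting $N_n := \sum_{i=1}^{n-1} R_i$, one has that, conditionally on $N_n$, $S_n$ is distributed as $\sum_{j=1}^{N_n} \tilde\Delta_j'$, where $(\tilde\Delta_j')$ are i.i.d.\ copies of $\Delta$ independent of $N_n$. This reduces the question to that of a randomly indexed sum with the index independent of the summands.

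First I would show $N_n/\log T_n \to 1$ in probability. Writing $p_i := (T_{i+1}-T_i)/T_{i+1}$, the choice $T_n = \lfloor e^{3 n^{1/3}}\rfloor$ gives $p_i = 1 - T_i/T_{i+1} \sim i^{-2/3}$; hence $\E[N_n] = \sum_{i=1}^{n-1} p_i \sim 3 n^{1/3} \sim \log T_n$, while $\Var(N_n) \leq \E[N_n]$, so the claim follows from Chebyshev's inequality.

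Next, for any bounded continuous $f:\R^d\to\R$, setting $Z_k := k^{-\alpha}\sum_{j=1}^k \tilde\Delta_j'$ and conditioning on $N_n$ yields
\[ \E\!\left[f\!\left(S_n/\log(T_n)^\alpha\right)\right] = \E\!\left[h_n(N_n)\right], \qquad h_n(k) := \E\!\left[f\!\left((k/\log T_n)^\alpha\, Z_k\right)\right]. \]
By \eqref{eq:CVloi}, $Z_k \limit^{(d)} \L$ as $k \to \infty$. Hence for any deterministic sequence $(k_n)$ with $k_n\to\infty$ and $k_n/\log T_n \to 1$, Slutsky's theorem gives $h_n(k_n) \to \E[f(\L)]$. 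Combined with the concentration of $N_n/\log T_n$ around $1$ and the uniform bound $\|h_n\|_\infty \leq \|f\|_\infty$, this implies $\E[h_n(N_n)] \to \E[f(\L)]$ by a routine subsequence extraction: along any subsequence, pick a further subsequence on which $N_{n_k}/\log T_{n_k} \to 1$ almost surely, note that then $N_{n_k} \to \infty$ a.s.\ as well, and apply bounded convergence.

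The only mildly delicate point is the passage from pointwise convergence of $h_n(k_n)$ to convergence of $\E[h_n(N_n)]$, but this is entirely standard; the substance of the lemma is the asymptotic $\E[N_n] \sim \log T_n$, which was engineered into the very choice of $T_n$.
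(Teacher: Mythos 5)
Your proposal is correct and follows essentially the same route as the paper: reduce the statement to the concentration $\sum_{i<n} R_i/\log T_n \to 1$ in probability (via the asymptotic $\E[\sum_{i<n} R_i]\sim\log T_n$ built into the choice of $T_n$) and then invoke \eqref{eq:CVloi}. You simply spell out, via the conditional representation with a random index and the subsequence/bounded-convergence argument, the step the paper compresses into ``it suffices to show'' plus an appeal to the law of large numbers.
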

\begin{proof} Recall that for $n\in \N$, $S_n=^{(d)} \sum_{i=1}^n R_i \Delta_i$, where $(R_i)_{i\in \N}$ are independent Bernoulli random variables of parameter $((T_{n+1}-T_n)/T_{n+1} )_{n\in \N}$. So given \eqref{eq:CVloi}, it suffices to show that $n\to \infty$, $\sum_{i=1}^{n-1} R_i /\log(T_n)$ converge in probability toward $1$. To this end, by the law of large number it suffices to estimate $\E[\sum_{i=1}^{n-1} B_i ]$. We have with elementary calculus
\[ \E\left [\sum_{i=1}^{n-1} R_i \right ]= \sum_{i=1}^{n-1} \frac{T_{n+1}-T_n}{T_{n+1}}\asym_{n\to \infty}  \sum_{i=1}^{n-1} \int_{T_i}^{T_{i+1}} \frac{\mathrm d x}{x} \asym_{n\to \infty} \log(T_n).\]
This concludes the proof. 
\end{proof}
 We first formulate a practical technical consequence of Proposition \ref{Main1}. Let $\mathcal F$ be the set of all functions $f:\R^d\mapsto \R$, 1-Lipschitz for the uniform distance, bounded by 1, and of bounded support. By Proposition \ref{Main1}, a.s. as $n\to \infty$,
 \[ \max_{f\in \mathcal F} \left | \E \left [\left . f(\Xc_{T_{n+1}}) \right |\Xb \right ] - \E\left [ \left . f(\Xc_{T_n}+Y_n) \right |\Xb \right ] \right |=O(1/n^{4/3}).\]
 It follows  by sum that a.s. as $n \to \infty$, $m\in \N$
  \[ \max_{f\in \mathcal F} \left | \E \left [\left . f(\Xc_{T_{n+m}}) \right |\Xb \right ] - \E\left [ \left . f\left (\Xc_{T_n}+\sum_{i=0}^{m-1} Y_{n+i} \right ) \right |\Xb \right ] \right |=O \left (\sum_{i=0}^{m-1} (n+i)^{-4/3} \right )=O(n^{-1/3}),\]
  which we may rewrite into: a.s. as $n\leq m \to \infty$,
    \[ \max_{f\in \mathcal F} \left | \E \left [\left . f(\Xc_{T_m}) \right |\Xb \right ] - \E\left [ \left . f\left (\Xc_{T_m}+S_m-S_n \right ) \right |\Xb \right ] \right |=O(n^{-1/3}).\]

Since for every $m$ large enough, $f\in \mathcal F$, the function $x\mapsto f(x/\log(T_m)^\alpha)$ is also in $\mathcal F$, this implies that a.s. as $n\leq m \to \infty$,
\begin{equation} \max_{f\in \mathcal F} \left | \E \left [\left . f\left (\frac{\Xc_{T_m}}{\log(T_m)^\alpha} \right ) \right |\Xb \right ] - \E\left [ \left . f\left ( \frac{ \Xc_{T_n}+S_m-S_n }{\log(T_m)^\alpha} \right ) \right |\Xb \right ] \right |=O(n^{-1/3}). \label{tempo} \end{equation}
Next, since for any $n\in \N$, almost surely  $|\Xc_{T_n} - S_n|<\infty$, we get from the Lipschitz property, and by the dominated convergence theorem that, a.s.  for every $n\in \N$ as $m\to \infty$,
\[ \E\left [ \left . f\left ( \frac{ \Xc_{T_n}+S_m-S_n }{\log(T_m)^\alpha} \right ) \right |\Xb \right ]- \E\left [ \left . f\left ( \frac{ S_m }{\log(T_m)^\alpha} \right ) \right |\Xb \right ]\limit 0. \]
Thus by \eqref{tempo}, a.s. as $n\to \infty$, 
\[ \limsup_{m\to \infty} \max_{f\in \mathcal F} \left | \E \left [\left . f\left (\frac{\Xc_{T_m} }{\log(T_m)^\alpha} \right ) \right |\Xb \right ] - \E\left [ \left . f\left ( \frac{S_m }{\log(T_m)^\alpha} \right ) \right |\Xb \right ] \right |=O(n^{-1/3}).\]
Since the left-hand side term above does not depends on $n$, and since $S_m$ and $\Xb$ are independent, we get that a.s. for every $f\in \mathcal F$ as $m\to \infty$,
\[ \E \left [\left . f\left (\frac{\Xc_{T_m}}{\log(T_m)^\alpha} \right ) \right |\Xb \right ] - \E\left [f\left ( \frac{S_m }{\log(T_m)^\alpha} \right ) \right ] \limit 0. \]
Moreover by the Portmanteau Theorem and by Lemma \ref{cvloi2} the above second expectation converges toward $\E[f(\Lambda)].$ So must the first: A.s. for every $f\in \mathcal F$ as $m\to \infty$,
\begin{equation} \E \left [\left . f\left (\frac{\Xc_{T_m}}{\log(T_m)^\alpha} \right ) \right |\Xb \right ] \limit \E[\Lambda]. \label{10/4/10h} \end{equation}

 Also as $n\to \infty$, writing $m_n$ for the larger integer with $T_{m_n}\leq n$, we have $T_{m_n}\sim n$ so by Lemma \ref{dTV}, for every $f\in \mathcal F$ as $n\to \infty$, 
\[ \E \left [\left . f\left (\frac{\Xc_{T_{m_n}}}{\log(T_{m_n})^\alpha} \right ) \right |\Xb \right ]-\E \left [\left . f\left (\frac{\Xc_{n}}{\log(T_{m_n})^\alpha} \right ) \right |\Xb \right ] \to 0.\]
Then using that all $f\in \mathcal F$ are Lipschitz and of bounded support, since as $n\to \infty$, $\log(T_{m_n})\to \infty$, and $\log(T_{m_n})\sim \log  (n)$, a.s. for every $f\in \mathcal F$ as $n\to \infty$, 
\[ \E \left [\left . f\left (\frac{\Xc_{T_{m_n}}}{\log(T_{m_n})^\alpha} \right ) \right |\Xb \right ]-\E \left [\left . f\left (\frac{\Xc_{n}}{\log(n)^\alpha} \right ) \right |\Xb \right ] \limit 0.\]
To conclude we finally get that by \eqref{10/4/10h} a.s. for every $f\in \mathcal F$ as $n\to \infty$,
\[ \E \left [\left . f\left (\frac{\Xc_{n}}{\log(n)^\alpha} \right ) \right |\Xb \right ]\limit \E[f(\Lambda)]. \]
Theorem \ref{Thm} follows by the Portmanteau theorem.
%%%%%%%%%%%%%%%%%%%

%%%%%%%%%%%%%%%%%%%%%%%%%%%%%%%%%%%%%%%%%%%%%%%%%%%%%%%%%%%%%%%%%%%
%%                                                               %%
%% Supplementary Material, if any, should be provided in         %%
%% {supplement} environment  with title and short description.   %%
%%                                                               %%
%%%%%%%%%%%%%%%%%%%%%%%%%%%%%%%%%%%%%%%%%%%%%%%%%%%%%%%%%%%%%%%%%%%

%%%%%%%%%%%%%%%%%%%%%%%%%%%%%%%%%%%%%%%%%%%%%%%%%%%%%%%%%%%%%%%%%%%
%%                                                               %%
%% Use the two commands below for producing your bibliography    %%
%% with bibtex, then comment again the commands and include the  %%
%% content of the .bbl file in this file below the commands.     %%
%%                                                               %%
%%%%%%%%%%%%%%%%%%%%%%%%%%%%%%%%%%%%%%%%%%%%%%%%%%%%%%%%%%%%%%%%%%%

%\bibliographystyle{amsplain}
%\bibliography{yourbibfilename}

% add below the content of your .bbl file produced by bibtex.

\bibliographystyle{unsrt}
\end{document}